\numberwithin{equation}{section}
\theoremstyle{plain}
	\newtheorem{thm}[equation]{Theorem}
	\newtheorem{lem}[equation]{Lemma}
	\newtheorem{lem/defn}[equation]{Lemma/Definition}
\theoremstyle{definition}
	\newtheorem{defn}[equation]{Definition}
	\newtheorem{ex}[equation]{Example}
	\newtheorem{question}[equation]{Question}
\theoremstyle{remark}
	\newtheorem{rem}[equation]{Remark}
\def\nc{\newcommand}
\def\on{\operatorname}
\nc{\edit}[1]{\marginpar{\footnotesize{#1}}}
\nc{\C}{\mathbb{C}}
\nc{\Z}{\mathbb{Z}}
\nc{\PP}{\mathbb{P}}
\nc{\R}{\mathbb{R}}
\nc{\LL}{\mathbb{L}}
\nc{\OO}{\mathcal{O}}
\nc{\X}{\EuScript{X}}
\nc{\sZ}{\EuScript{Z}}
\nc{\id}{{\on{id}}}
\nc\Hom{{\on{Hom}}}
\nc\cone{{\on{cone}}}
\nc\Ob{{\on{Ob}}}
\nc\Spec{{\on{Spec}}}
\nc\Mod{{\on{Mod}}}
\nc\Perf{{\on{Perf}}}
\nc\End{{\on{End}}}
\nc{\into}{\hookrightarrow}
\nc{\tr}{\on{tr}}
\nc{\ev}{\on{ev}}
\nc{\im}{\on{im}}
\nc{\Mot}{\on{Mot}}
\nc{\pt}{\on{pt}}
\nc{\coker}{\on{coker}}
\nc{\rk}{\on{rank}}
\nc{\TOP}{\on{Top}_{\mathbb{C}}^{s}}
\nc{\gr}{\on{gr}}
\nc{\Catperf}{\text{Cat}^{\text{perf}}}
\nc{\Sym}{\on{Sym}}
\nc{\xra}{\xrightarrow}
\nc{\lra}{\xleftarrow}
\nc{\Bet}{\mathbf{Betti}_{X}}
\nc{\codim}{\on{codim}}
\nc{\Fred}{\on{Fred}}
\nc{\colim}{\on{colim}}
\nc{\KK}{{\bf K}}
\nc{\Sp}{\on{Sp}}
\nc{\onto}{\twoheadrightarrow}
\nc{\A}{\mathbb{A}}
\nc{\Aff}{\on{Aff}}
\nc{\SH}{\on{SH}}
\nc{\QCoh}{\on{QCoh}}
\nc{\Alg}{\on{Alg}}
\nc{\Br}{\on{Br}}
\nc{\ta}{\widetilde{\a}}
\nc{\Shv}{\on{Shv}}
\nc{\GG}{\mathbb{G}}
\nc{\red}{\color{red}}
\nc{\an}{\on{an}}
\nc{\D}{\on{D}}
\nc{\qc}{\on{qc}}
\nc{\op}{\on{op}}
\nc{\shEnd}{{\mathcal End}}
\nc{\Sph}{\mathbb{S}}
\nc{\Top}{\on{Top}}
\nc{\Map}{\on{Map}}
\nc{\Vect}{\on{Vect}}
\nc{\holim}{\on{holim}}
\newcommand\blfootnote[1]{%
  \begingroup
  \renewcommand\thefootnote{}\footnote{#1}%
  \addtocounter{footnote}{-1}%
  \endgroup
}
\def\Im{\on{Im}}
\def\J{\mathcal{J}}
\def\A{\mathcal{A}}
\def\cG{\mathcal{G}}
\def\top{\on{top}}
\def\a{\alpha}
\def\th{\on{th}}
\def\Perf{\on{Perf}}
\def\Sp{\on{Sp}}
\def\b{\beta}
\def\P{\EuScript{P}}
\def\CC{\EuScript{C}}
\def\Et{\text{Lis-\'Et}}
\def\et{\text{lis-\'et}}
\def\QCoh{\on{QCoh}}
\def\Vect{\on{Vect}}
\title{On the topological $K$-theory of twisted equivariant perfect complexes}
\author{Michael K. Brown \and Tasos Moulinos}
\date{}
\begin{document}
\vspace{18mm} \setcounter{page}{1} \thispagestyle{empty}

\maketitle
\blfootnote{MB gratefully acknowledges support from the National Science Foundation (NSF award DMS-1502553).}

\begin{abstract}
We construct a comparison map from the topological $K$-theory of the dg-category of twisted perfect complexes on certain global quotient stacks to twisted equivariant $K$-theory, generalizing constructions of Halpern-Leistner-Pomerleano and Moulinos (\cite{HLP}, \cite{moulinos}). We prove that this map is an equivalence if a version of the projective bundle theorem holds for twisted equivariant $K$-theory. Along the way, we give a new proof of a theorem of Moulinos that the comparison map is an equivalence in the non-equivariant case. 
\end{abstract}

\tableofcontents

\section{Introduction}

Let $\EuScript{C}at_{\on{dg}}(\C)$ denote the $\infty$-category of $\C$-linear dg categories, and let $\Sp$ denote the $\infty$-category of spectra. Blanc introduced in \cite{blanc} a functor
$$
K^{\top} : \EuScript{C}at_{\on{dg}}(\C) \to \Sp,
$$
the \emph{topological $K$-theory functor for $\C$-linear dg categories}, based on a proposal of To\"en (\cite{toenktop}). Blanc proves that $K^{\top}$ enjoys the following properties:

\begin{itemize}
\item[(1)] $K^{\top}$ maps Morita equivalences to weak equivalences,
\item[(2)] $K^{\top}$ maps short exact sequences of dg categories to fiber sequences, and  
\item[(3)] if $X$ is a separated scheme of finite type over $\C$, there is a natural equivalence
$$
K^{\top}(\Perf(X)) \xra{\simeq} KU(X^{\an})
$$ 
in $\Sp$, where $KU( - )$ is the (ordinary) topological $K$-theory functor for topological spaces, and $X^{an}$ denotes the complex points of $X$ equipped with the analytic topology.
\end{itemize}

A main source of motivation for the construction of $K^{\top}$ is Katzarkov-Kontsevich-Pantev's seminal work \cite{KKP} on noncommutative Hodge theory. The authors predict in \cite[Section 2.2.6(b)]{KKP} that there should be a notion of topological $K$-theory of a $\C$-linear dg-category $\CC$ such that, when $\CC$ is smooth and proper, its topological $K$-theory provides a rational lattice inside its periodic cyclic homology, just as in the classical setting of smooth proper complex varieties. This prediction is formulated precisely in Conjecture 1.7 of \cite{blanc}. 

Blanc's comparison theorem (item (3) above) has been extended by Halpern-Leistner-Pomerleano to quotient stacks. Before stating Halpern-Leistner-Pomerleano's theorem, we recall that, when $X$ is a $\C$-scheme with action of a complex algebraic group $G$, we say $X$ is \emph{$G$-quasi-projective} if there exists a locally
closed immersion $\iota : X\to \PP(V)$, where $V$ is a finite-dimensional $G$-representation and $\iota$ is $G$-equivariant.

\begin{thm}[\cite{HLP} Theorem 3.9]
\label{HLP}
Let $G$ be a complex linear algebraic group, and let $X$ be a smooth $G$-quasi-projective scheme over $\C$. Choose a decomposition 
$$G = U \rtimes H,$$
where $H$ is reductive and $U$ is a connected unipotent group. Let $M$ be a maximal compact subgroup of $H$, and let $KU_M( X^{\an})$ denote the $M$-equivariant topological $K$-theory spectrum of $X^{\an}$. There is a canonical equivalence
$$
\rho_{G, X}: K^{\top}(\Perf([X/G])) \xra{\simeq} KU_M(X^{\an}).
$$
\end{thm}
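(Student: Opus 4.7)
The strategy is to reduce the assertion to Blanc's comparison theorem (item (3) of the excerpt) via a two-stage descent: first peeling off the unipotent part, then reducing to the affine case.

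The first step is to replace $G$ by $H$. A connected unipotent complex algebraic group is isomorphic to $\A^n$ as a variety, hence contractible as a topological space, so $M$ remains a maximal compact of $G$ up to homotopy and the right-hand side $KU_M(X^{\an})$ does not see the unipotent factor. On the left, one needs the restriction map $\Perf([X/G]) \to \Perf([X/H])$ to induce an equivalence on $K^{\top}$. I would filter $U$ by its lower central series to reduce to the case $U = \GG_a$, and then analyze the bar resolution associated to the $\GG_a$-action, exploiting that $\GG_a^{\an} \simeq \C$ is contractible to collapse the simplicial object after applying $K^{\top}$.

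Once $G = H$ is reductive, I would pass to the affine setting by equivariant Zariski (or Nisnevich) descent. Using Sumihiro's theorem and its extension to algebraic spaces, one can cover $X$ by $G$-stable quasi-affine opens. Property (2) of $K^{\top}$ yields a Mayer--Vietoris sequence for an equivariant open cover (via the semi-orthogonal decomposition associated to an open/closed decomposition), and equivariant topological $K$-theory satisfies the analogous Mayer--Vietoris, so the comparison map is determined by its restriction to the affine case. In the affine case, $\Perf([X/G])$ can be realized as the totalization of the cosimplicial dg category $\Perf(X \times G^{\bullet})$ coming from the bar construction. Each $X \times G^n$ is a separated finite-type $\C$-scheme, so Blanc's theorem applies termwise; passing to $K^{\top}$ and commuting with totalization reproduces the Segal-style bar model for $KU_M(X^{\an})$.

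The main obstacle, in my view, will be step two: constructing the equivariant Mayer--Vietoris for $K^{\top}$ and checking that Blanc's comparison map intertwines it with Mayer--Vietoris for $KU_M$ in a sufficiently coherent, $\infty$-categorical way, so as to induce a map of descent spectral sequences. A close runner-up is the convergence issue in step three, where the standard bar construction for equivariant $K$-theory presents $KU_M(X^{\an})$ as a geometric realization rather than a totalization; reconciling the two requires either a finiteness/dualizability argument coming from the reductivity of $G$ and smoothness of $X$, or a careful passage through the appropriate limit-colimit interchange.
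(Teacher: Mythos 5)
This statement is quoted from Halpern--Leistner--Pomerleano (\cite{HLP}); the paper you are reading does not prove it, and only recalls the \emph{construction} of the comparison map $\rho_{G,X}$ in Section~\ref{ktop} (via Thomason's map $K(\Perf([X/G]))\to KU_M(X^{\an})$, sheafification over $\Aff_\C$, topological realization, and base change $-\otimes_{ku}KU$). So there is no ``paper's own proof'' to compare against; I will assess your proposal on its own terms.

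Your step~3 has a fatal gap, and it is precisely the issue you relegate to ``a close runner-up.'' While $\Perf$ does satisfy smooth descent, so that $\Perf([X/G])\simeq \Tot\,\Perf(X\times G^\bullet)$, the functor $K^{\top}$ does \emph{not} commute with totalizations of cosimplicial dg categories. If you force the interchange, the totalization of $K^{\top}(\Perf(X\times G^\bullet))\simeq KU(X^{\an}\times (M^{\an})^\bullet)$ is the cobar construction computing Borel-equivariant $K$-theory $KU((X^{\an})_{hM})$, which is the Atiyah--Segal \emph{completion} of $KU_M(X^{\an})$ at the augmentation ideal of $R(M)$, not $KU_M(X^{\an})$ itself. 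Already for $X=\pt$ and $G=\GG_m$ the two sides disagree: $K^{\top}(\Perf(B\GG_m))\simeq R(U(1))\otimes KU\simeq \Z[t,t^{-1}]\otimes KU$, whereas the totalization gives $KU(\C\PP^\infty)\simeq \Z[[t-1]]\otimes KU$. Your proposal would at best reprove Thomason's completion theorem, not the stated result, whose whole point is to land in the genuine (representable) equivariant spectrum --- exactly the distinction the remark following the theorem statement emphasizes. No ``finiteness/dualizability'' argument can repair this, since the failure is visible at a point. The actual proof in \cite{HLP} avoids descent along the bar construction: it exploits the fact that semi-topological $K$-theory, being defined as a \emph{realization} (a colimit) of the presheaf $K(\Perf([X/G]\times_\C-))$, already produces the uncompleted ring $R(M)\otimes ku$ in the base case $X=\pt$, and then proceeds by reduction to the reductive case, equivariant excision/localization (Thomason), and stratification arguments rather than by a cosimplicial limit.

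Two smaller remarks. Your step~1 (peeling off the unipotent radical) is sound in spirit but overengineered: the standard route is to observe that restriction $\Perf([X/G])\to\Perf([X/H])$ is a $K$-theoretic equivalence because $U$ has vanishing higher algebraic group cohomology in characteristic zero, and that $M$ remains a maximal compact of $G^{\an}$ since $U^{\an}$ is contractible --- no bar resolution of $\GG_a$-actions is needed. Your step~2 (equivariant Nisnevich/Zariski Mayer--Vietoris via Sumihiro) is a legitimate reduction tool and both sides do satisfy it, but it only reduces the problem to the local case; it does not, by itself, circumvent the obstruction in step~3.
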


\begin{rem}
It is important to note that $KU_M(-)$ denotes the \emph{representable} $M$-equivariant complex topological $K$-theory spectrum, as defined in \cite[Chapter XIV]{May}, as opposed to the $K$-theory with compact support discussed in \cite{segal}.
\end{rem}

Blanc's comparison theorem has also been extended by the second author to twisted perfect complexes:

\begin{thm}[\cite{moulinos} Theorem 9.6]
\label{moulinos}
Let $X$ be a separated scheme of finite type over $\C$, and let $\A$ be an Azumaya algebra on $X$. Let $\a$ denote the twist of $K$-theory determined by $\A$, and let $KU^{\a}(X^{\an})$ denote the $\a$-twisted $K$-theory spectrum of $X^{\an}$. There is a canonical equivalence
$$
K^{\top}(\Perf(X, \A)) \xra{\simeq} KU^{\a}(X^{\an}).
$$
\end{thm}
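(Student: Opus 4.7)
My plan is to reduce to Blanc's comparison equivalence (item (3) of the introduction) by descent along an \'etale trivialization of $\A$. Since $\A$ is Azumaya, there exists an \'etale cover $\{f_i \colon U_i \to X\}_{i \in I}$ on which each pullback $f_i^*\A$ is isomorphic to a matrix algebra $M_{n_i}(\OO_{U_i})$. Morita invariance for $\Perf(-,-)$ combined with property (1) of $K^{\top}$ and Blanc's theorem then yields a natural equivalence
$$
K^{\top}(\Perf(U_i, f_i^*\A)) \simeq K^{\top}(\Perf(U_i)) \simeq KU(U_i^{\an}).
$$
On the analytic side, the twist $\a$ restricts trivially to each $U_i^{\an}$, so $KU^{\a}(U_i^{\an}) \simeq KU(U_i^{\an})$, and the same identifications hold on every term of the \v{C}ech nerve of the cover.

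The heart of the argument is a descent statement for the functor $X \mapsto K^{\top}(\Perf(X, \A))$. One first shows that $\Perf(X, \A)$ is recovered as the limit in $\EuScript{C}at_{\on{dg}}(\C)$ of the cosimplicial dg category associated to the \v{C}ech nerve of the cover; this follows from standard descent for quasi-coherent sheaves of $\A$-modules. One then has to show that $K^{\top}$ preserves this particular limit. This is the subtle step, since $K^{\top}$ is constructed from an intricate completion and Bott-inversion procedure and is not a priori compatible with arbitrary limits. A natural strategy is to pass through Blanc's semi-topological $K$-theory $K^{\on{st}}$, verify descent there using its relationship with algebraic $K$-theory and Thomason--Trobaugh-style descent for perfect complexes, and then conclude after Bott inversion. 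Alternatively, one might construct an analytic model for $K^{\top}(\Perf(X, \A))$ directly from sheaves of Fredholm operators twisted by an analytification of $\A$, which would satisfy analytic descent tautologically.

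With descent in hand on both sides, the remaining task is to identify the descent data. The Azumaya algebra $\A$ is classified by a \v{C}ech cocycle in $H^1(X_{\et}, PGL_n)$ whose image under the boundary map lies in $H^2(X_{\et}, \GG_m)$ and represents its Brauer class; applying the exponential sequence $\Z \to \OO_{X^{\an}} \to \OO_{X^{\an}}^{\times}$ on the analytic side produces the corresponding class $\a \in H^3(X^{\an}, \Z)$ that defines the twist on topological $K$-theory. The transition functions of $\A$ act on each local model $\Perf(U_i, f_i^*\A) \simeq \Perf(U_i)$ by Morita self-equivalences, i.e.\ tensoring with a shifted line bundle, and one checks that under Blanc's equivalence these recover the twisting cocycles for $KU^{\a}$.

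The step I expect to be the main obstacle is the descent statement for $K^{\top}$. All other ingredients—Morita triviality on the cover, the cohomological identification of the twist class, and compatibility of the gluing data—should reduce to fairly standard manipulations once descent is available.
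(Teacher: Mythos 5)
Your proposal takes a genuinely different route from the paper, and it hinges on a step that I do not think can be carried out: \'etale descent for $K^{\top}$ applied to the \v{C}ech nerve of an \'etale trivialization. Although $\Perf(-, \A)$ does satisfy \'etale descent as a presheaf of dg categories, the functor $K^{\top}$ does not preserve the relevant limit. The Thomason--Trobaugh descent you invoke for perfect complexes is Zariski/Nisnevich descent, and it transfers to (connective or nonconnective) algebraic $K$-theory because $K$-theory is a localizing invariant and Nisnevich squares induce Verdier quotient squares --- a finite-limit statement. Genuine \'etale descent for $K$-theory is only available after passing to finite coefficients or profinite completion (Thomason), and $K^{\top} = K^{\on{st}} \otimes_{ku} KU$ is Bott-inverted but \emph{not} reduced mod $n$. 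Moreover, the topological realization $|-|$ used to build $K^{\on{st}}$ is a colimit construction and does not commute with the cosimplicial limit defining \v{C}ech descent. So the ``subtle step'' you flag is not merely subtle: as stated it is an unproved (and, to my knowledge, open in this generality) descent theorem, and circumventing it is precisely the point of the argument that both the paper and \cite{moulinos} use.

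The paper's proof of Theorem \ref{moulinos} (as the special case $G = \{e\}$ of Theorem \ref{main}) avoids descent entirely by working with the Severi--Brauer variety $p : P \to X$ associated to $\A$. On the algebraic side one has the semi-orthogonal decomposition
$\Perf(P) = \langle \Perf(X), \Perf(X, \A), \dots, \Perf(X, \A^{r-1}) \rangle$
of Theorem \ref{bernardera}, which under property (2) of $K^{\top}$ (short exact sequences $\mapsto$ fiber sequences) gives a direct sum decomposition of $K^{\top}(\Perf(P))$; this only uses the \emph{localizing} property of $K^{\top}$, not any descent. On the analytic side one proves the analogous splitting
$KU(P^{\an}) \simeq \bigoplus_j KU^{\a^j}(X^{\an})$
(Theorem \ref{topktheory2}) via the projective bundle theorem for untwisted $K$-theory and a Mayer--Vietoris / infinite-mapping-cylinder argument (this is where the ``gluing over a good cover'' idea actually lives, but now purely in classical topological $K$-theory where descent is unproblematic). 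Blanc's comparison applied to the untwisted $\Perf(P)$ supplies the equivalence $K^{\top}(\Perf(P)) \simeq KU(P^{\an})$, and Lemma \ref{mixed} verifies that it carries each algebraic summand to the corresponding analytic one. Compared to your outline, this trades the intractable descent problem for two concrete inputs that are already in the literature: Bernardera/Bergh--Schn\"urer's semi-orthogonal decomposition for Severi--Brauer varieties, and the projective bundle theorem in topological $K$-theory. I would encourage you to try the Severi--Brauer route; if you want to salvage the descent approach, you would at minimum need to prove that $K^{\top}$ satisfies descent for the specific \v{C}ech nerve of a cover trivializing $\A$, which appears to require the comparison theorem you are trying to prove.
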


A useful consequence of these comparison theorems is that equivariant topological $K$-theory of smooth $G$-quasi-projective schemes over $\C$ (resp. twisted $K$-theory of separated finite type schemes over $\C$) is invariant under Morita equivalences of dg-categories of equivariant perfect complexes (resp. twisted perfect complexes).

In this article, we study a common generalization of Theorems \ref{HLP} and \ref{moulinos}. That is, we ask whether the topological $K$-theory of the dg-category of twisted equivariant perfect complexes is equivalent to an associated twisted equivariant topological $K$-theory spectrum. While we do not answer this question in this paper, we construct a comparison map from one to the other, and we reduce the question to a projective bundle-type formula in twisted equivariant $K$-theory.

In more detail: let $G$, $X$, and $M$ be as in Theorem \ref{HLP}, $\A$ an Azumaya algebra on the quotient stack $[X/G]$, and $\alpha$ the twist of the $M$-equivariant topological $K$-theory of $X^{\an}$ determined by $\A$. Denote the $\a$-twisted $M$-equivariant $K$-theory spectrum of $X^{\an}$ by $KU_M^{\a}(X^{\an})$. For background on twisted equivariant $K$-theory, we refer the reader to \cite[Section 6]{AS} or \cite{hopkins}. Our first goal is to construct a comparison map
\begin{equation}
\label{comparisonmap}
KU_M^{\a}(X^{\an}) \to K^{\top}(\Perf([X/G], \A)).
\end{equation}
We briefly outline the construction. Let $[P/G] \to [X/G]$ be the Severi-Brauer stack associated to the Azumaya algebra $\A$ (see Section \ref{azumaya} for details). By Theorem \ref{HLP}, we have an equivalence
$$
\rho_{G,P} : K^{\top}(\Perf[P/G])) \xra{\simeq} KU_M(P^{\an}). 
$$
In Theorem \ref{bernardera}, we prove that there is a semi-orthogonal decomposition
$$
\Perf([P/G]) = \langle \Perf([X/G]), \Perf([X/G], \A), \dots, \Perf([X/G], \A^{r-1}) \rangle,
$$
where $r$ is the degree of $\A$, and the Azumaya algebra $\A^j$ is the $j$-fold tensor power of $\A$. It follows that
\begin{equation}
\label{decomp1}
K^{\top}(\Perf([P/G])) \simeq \bigoplus_{j = 0}^{r-1} K^{\top}(\Perf([X/G], \A^j)). 
\end{equation}
We also define in Section \ref{proofsection} a canonical map 
\begin{equation}
\label{decomp2}
\bigoplus_{j = 0}^{r-1} KU^{\a^j}_M(X^{\an}) \to KU_M(P^{\an}).
\end{equation}
When $\a$ is trivializable, this is the equivalence from the projective bundle formula for (untwisted) equivariant $K$-theory (see Theorem \ref{trivializable}(1)). We define the map (\ref{comparisonmap}) by mapping $KU_M^{\a}(X^{\an})$ to $KU_M(P^{\an})$ via (\ref{decomp2}), applying $\rho_{G, P}^{-1}$, and then projecting onto $K^{\top}(\Perf([X/G], \A))$ via (\ref{decomp1}). Our main result is:

\begin{thm}
\label{main}
Let $G$ be a complex linear algebraic group, $X$ a smooth $G$-quasi-projective scheme over $\C$, and $\A$ an Azumaya algebra on $[X/G]$. Let $M$ be as in the statement of Theorem \ref{HLP}, and let $\a$ be the twist of the $M$-equivariant topological $K$-theory of $X^{\an}$ determined by $\A$. There is a natural comparison map 
$$
KU_M^{\a}(X^{\an}) \to K^{\top}(\Perf([X/G], \A)),
$$
and it is an equivalence if the map \eqref{decomp2} is an equivalence.
\end{thm}

We give a proof that the map (\ref{decomp2}) is an equivalence in the case where $G$ is trivial (Theorem \ref{trivializable}(3)). This recovers a result of the second author \cite[Theorem 1.3]{moulinos}; combining this with Theorem \ref{main} gives a new, simpler proof of the comparison theorem for non-equivariant twisted perfect complexes (Theorem \ref{moulinos} above); see Remark \ref{moulinosthm} below for details. We also show in Theorem \ref{trivializable}(2) that the question of whether the map \eqref{decomp2} is an equivalence may be reduced to the case where $X = \Spec(\C)$. 






\begin{rem}
We note that Bergh-Schn\"urer independently obtained the semi-orthogonal decomposition in Theorem \ref{bernardera}; this result appeared in the preprint \cite{BS2} a few days after the first version of this article was posted.
\end{rem}

\vskip\baselineskip
\noindent{\bf Acknowledgements.} The authors are grateful to Benjamin Antieau for valuable comments during the preparation of this article. We thank the referee for his or her careful reading and insightful comments, and especially for catching an error in a previous version of this article.



\section{Background on Azumaya algebras over algebraic stacks}
\label{azumaya}

Let $\X$ be an algebraic stack over a base scheme $S$. We recall the definition of the lisse-\'etale site of $\X$, denoted $\Et(\X)$ (\cite[Definition 9.1.6]{olssonbook}). The objects of $\Et(\X)$ are pairs $(U, f)$, where $U$ is a scheme and $f: U \to \X$ is a smooth morphism. A morphism $(U, f) \to (V, g)$  is given by a morphism $h: U \to V$ of schemes along with a 2-isomorphism $g \circ h \cong f$. A family $\{(U_i, f_i) \to (U, f)\}$ of morphisms is a covering if 
$$
\bigsqcup_i f_i : \bigsqcup_i U_i \to U
$$
is an \'etale covering. Denote by $\X_{\et}$ the lisse-\'etale topos of $\X$. The structure sheaf $\OO_\X \in \X_{\et}$ is given by
$$
(U, f) \mapsto \OO_U. 
$$

\begin{defn}
An \emph{Azumaya algebra of degree $r$ over $\X$} is a locally free $\OO_\X$-algebra $\A$ such that $\A$ is lisse-\'etale-locally isomorphic to $\on{Mat}_r(\OO_\X) := \shEnd_{\OO_\X}(\OO^{\bigoplus r}_\X)$; that is, for every smooth morphism $f: U \to \X$, where $U$ is a scheme, there is an \'etale covering $\{\gamma_i : U_i \to U \}$ such that $(f \circ \gamma_i)^*\A \cong \on{Mat}_r(\OO_{U_i})$. We shall say $\A$ is \emph{trivializable} if there is an isomorphism $\A \cong \shEnd_{\OO_\X}(\mathcal{F})$ for some locally free $\OO_{\X}$-module $\mathcal{F}$. When such an isomorphism has been fixed, we will say $\A$ is \emph{trivial}.
\end{defn} 

We call a morphism $p: \P \to \X$ of algebraic stacks a \emph{Severi-Brauer stack of relative dimension $r$} if it is lisse-\'etale-locally isomorphic to a projectivized vector bundle of relative dimension $r$. We briefly describe a bijection between isomorphism classes of Azumaya algebras of degree $r$ and Severi-Brauer stacks of relative dimension $r - 1$. Define $\on{GL}_r$ to be the group of units in $\on{Mat}_r(\OO_\X)$ (so  $\text{GL}_1 = \mathbb{G}_m $), and set $\on{PGL}_r := \on{GL}_r / \mathbb{G}_m$. Conjugation determines an isomorphism
$$
\phi : \on{PGL}_r \xra{\cong} \A ut(Mat_r(\OO_\X)).
$$
For any group object $\cG$ in $\X_{\et}$, there are cohomology functors $H^{i}(\X, \cG)$ for $i= 0,1$; if $\cG$ is an abelian group object, these functors are defined for all $i \geq 0$. Moreover, the set $H^{1}( \X, \cG)$ classifies $\cG$-torsors on $\X$. The isomorphism $\phi$ therefore gives a bijection between isomorphism classes of Azumaya algebras of degree $r$ on $\X$ and the set $H^{1}(\X, \on{PGL}_{r})$. Such a torsor determines a Severi-Brauer stack of relative dimension $r - 1$. On the other hand, let $p: \P \to \X$ be a Severi-Brauer stack of relative dimension $r-1$. For each smooth morphism $U \to \X$, where $U$ is a scheme, choose an \'etale cover $\{U_i \to U\}$ such that, for each $i$, the pullback of $p$ along $U_i \to U \to \X$ is isomorphic to $\PP^{r-1}_{U_i} \to U_i$. The line bundles $\OO_{\PP^{r-1}_{U_i}}(-1)$ do not necessarily glue to give a line bundle on $\P$; however, applying a construction of Quillen in \cite[Section 8.4]{quillen}, one may construct a canonical vector bundle $\J$ on $\P$ such that, for each $U_i$, the pullback of $\J \to \P \xra{p} \X$ along $U_i \to \X$ is isomorphic to $\OO_{\PP^{r-1}_{U_i}}(-1)^{\oplus r}$. In more detail: one uses that
\begin{itemize}
\item the bundle $\OO_{\PP^{r-1}_\X}(-1)^{\oplus r}$ on $\PP^{r-1}_\X$ is $\on{PGL}_r$-equivariant, and 
\item $\P$ is the bundle associated to a $\on{PGL}_r$-torsor 
\end{itemize}
to construct the descent data necessary to glue the bundles $\OO_{\PP^{r-1}_{U_i}}(-1)^{\oplus r}$ into a bundle $\J$ on $\P$ (Quillen only works with schemes in \cite{quillen}, but the construction adapts to the setting
of algebraic stacks). The Azumaya algebra associated to $\P$ is $p_*(\shEnd_{\OO_\P}(\J))^{\op} = p_*\shEnd_{\OO_\P}(\J^\vee)$.

Given an Azumaya algebra $\A$ on $\X$, denote by $\Perf(\X, \A)$ the dg-category of perfect complexes of left $\A$-modules.

\section{Semi-orthogonal decompositions for Severi-Brauer stacks}
\label{semiorthogonal}
We obtain in this section a semi-orthogonal decomposition of the dg-category of perfect complexes on a Severi-Brauer stack (Theorem \ref{bernardera}), generalizing a theorem of Bernardera (\cite{bern} Theorem 5.1). We emphasize that our proof of Theorem \ref{bernardera} is just a matter of concatenating several results of Bergh-Schn\"urer in \cite{BS}.

Let $\X$ be an algebraic stack over a scheme $S$, and let $p : \P \to \X$ be a Severi-Brauer stack of relative dimension $r -1$, as defined in Section \ref{azumaya}. For each smooth morphism $U \to \X$, where $U$ is a scheme, choose an \'etale cover $\{U_i \to U\}$ such that we have an isomorphism
$$
\xymatrix{
 U_i \times_\X \P \ar[d]^-{p} \ar[r]^-{\varphi_i}_-{\cong} & \PP|_{U_i}^{r-1} \ar[dl] \\
 U_i
}
$$
of schemes over $U_i$ for each $i$.

As discussed in Section \ref{azumaya}, there is a canonical vector bundle $\J$ on $\P$ such that the pullback of $\J \to \P \xra{p} \X$ along $U_i \to \X$ is isomorphic to $\OO_{\PP^{r-1}_{U_i}}(-1)^{\oplus r}$ for all $i$, and
$$
\A : = p_*\shEnd_{\OO_\P}(\J^\vee)
$$ is the Azumaya algebra on $\X$ corresponding to $p$. For all $j \in \Z$, the $j$-fold tensor power $\A^j$ of $\A$ is isomorphic to 
$p_*\shEnd_{\OO_\P}((\J^\vee)^{\otimes j})$; note that there is a canonical isomorphism
$$
p^*(\A^j) \cong \shEnd_{\OO_\P}((\J^\vee)^{\otimes j}).
$$
In particular, $p^*(\A^j)$ is a trivial Azumaya algebra. Noting that $\J^{\otimes j}$ is a right $p^*(\A^j)$-module, we have an equivalence 
$$
T_j: \Perf(\P, p^*(\A^j)) \to \Perf(\P)
$$
given by
$$
\mathcal{F} \mapsto \J^{\otimes j} \otimes_{p^*(\A^j)} \mathcal{F} ,
$$
with inverse $S_j$ given by
$$
\mathcal{G} \mapsto  (\J^\vee)^{\otimes j}  \otimes_{\OO_\P}\mathcal{G} .
$$
Define dg functors
$$
\Phi_j:=T_j \circ p^* : \Perf(\X, \A) \to \Perf(\P).
$$
Note that each $\Phi_j$ has a right adjoint $\Psi_j := p_* \circ S_j$. 

We recall that a dg functor is called \emph{quasi-fully faithful} if the induced functor on homotopy categories is fully faithful.
\begin{thm}
\label{bernardera}
The dg functors $\Phi_j$ are quasi-fully faithful, and there is a semi-orthogonal decomposition
$$
\Perf(\P) = \langle \Im(\Phi_0), \dots, \Im(\Phi_{r-1}) \rangle.
$$
\end{thm}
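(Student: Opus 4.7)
The plan is to deduce the theorem by combining the classical Beilinson decomposition of $\Perf(\PP^{r-1})$ with lisse-\'etale descent for semi-orthogonal decompositions, as developed by Bergh-Schn\"urer. For quasi-full-faithfulness of each $\Phi_j$, the adjunction $\Phi_j \dashv \Psi_j$ reduces the claim to the unit $\cF \to \Psi_j \Phi_j(\cF)$ being an equivalence. Since $S_j$ is inverse to $T_j$ by construction, $\Psi_j \circ \Phi_j \simeq p_* \circ p^*$, and via the projection formula this reduces to the lisse-\'etale-local statement $p_* \OO_\P \simeq \OO_\X$, which holds because $p$ is locally a $\PP^{r-1}$-bundle and $p_*\OO_{\PP^{r-1}} \simeq \OO$.

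For semi-orthogonality, fix $0 \leq j < i \leq r - 1$; the vanishing $\Hom(\Phi_i(\cF), \Phi_j(\cG)) = 0$ reduces via adjunction to $\Psi_i \circ \Phi_j \simeq 0$, and a direct computation using the projection formula gives
$$
\Psi_i \Phi_j(\cG) \simeq p_*\bigl((\J^\vee)^{\otimes i} \otimes_{\OO_\P} \J^{\otimes j}\bigr) \otimes_{\A^j} \cG.
$$
On a lisse-\'etale trivialization where $\J \cong \OO_{\PP^{r-1}}(-1)^{\oplus r}$, the inner sheaf becomes a free module on $\OO_{\PP^{r-1}}(j-i)$, whose pushforward vanishes since $-(r-1) \leq j - i \leq -1$; conservativity of lisse-\'etale pullback upgrades this to the desired global vanishing.

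The remaining step, that the images $\Im(\Phi_j)$ collectively generate $\Perf(\P)$, is where I would invoke Bergh-Schn\"urer's descent theorem for semi-orthogonal decompositions from \cite{BS}. On each trivialization $U_i \times_\X \P \cong \PP^{r-1}_{U_i}$, the classical Beilinson decomposition expresses $\Perf(\PP^{r-1}_{U_i})$ in terms of the line bundles $\OO_{\PP^{r-1}_{U_i}}(-j)$, which are the local models of the functors $\Phi_j$ by the very construction of $\J$ recalled in Section \ref{azumaya}. The main obstacle will be verifying that the hypotheses of the descent theorem apply in this setting and that the local Beilinson decompositions are compatible with the Morita equivalences $T_j$, so that they assemble into the claimed global decomposition with the twisted-sheaf summands $\Im(\Phi_j) \simeq \Perf(\X, \A^j)$ in the correct order.
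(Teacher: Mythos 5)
Your approach is genuinely different from the paper's for the full-faithfulness and semi-orthogonality steps. The paper treats both of these, together with generation, as a black box: it verifies the hypotheses of Bergh--Schn\"urer's conservative descent for fully faithfulness (\cite[Proposition 4.12]{BS}) and conservative descent for semi-orthogonal decompositions (\cite[Theorem 5.16]{BS}), and then feeds in their projective bundle theorem for algebraic stacks (\cite[Corollary 6.8]{BS}). You instead run an explicit adjunction computation: full faithfulness reduces via $S_jT_j\simeq\id$ and the projection formula to $p_*\OO_\P\simeq\OO_\X$, and semi-orthogonality to a computation of $\Psi_i\Phi_j$. The direct route has the pedagogical virtue of making the local model transparent, whereas the paper's route sweeps the projection formula, flat base change, and the compatibility-of-orderings issue into the hypotheses of the cited descent theorems, which is where the real work is hidden. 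Note also that your final step appeals to \cite[Theorem 5.16]{BS} for generation, but that theorem actually produces the \emph{entire} semi-orthogonal decomposition at once, so your explicit semi-orthogonality verification is logically redundant relative to the paper's argument (though still a worthwhile sanity check).

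There is, however, a sign error in your semi-orthogonality computation that you should fix. With $\J\cong\OO_{\PP^{r-1}}(-1)^{\oplus r}$ locally, one has $\J^\vee\cong\OO(1)^{\oplus r}$, so $(\J^\vee)^{\otimes i}\otimes_{\OO_\P}\J^{\otimes j}$ is locally a free module on $\OO_{\PP^{r-1}}(i-j)$, not $\OO_{\PP^{r-1}}(j-i)$. For $0\le j<i\le r-1$ this is $\OO(i-j)$ with $1\le i-j\le r-1$, whose derived pushforward does \emph{not} vanish. To make the vanishing argument work you need $i-j\in\{-(r-1),\dots,-1\}$, i.e.\ the inequality $i<j$. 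Concretely: with the Kuznetsov convention $\Hom(\cA_a,\cA_b)=0$ for $a>b$, the exceptional collection you get locally from $\Phi_0,\dots,\Phi_{r-1}$ (which locally twist by $\OO,\OO(-1),\dots,\OO(-(r-1))$ respectively) is the \emph{reversed} Beilinson collection $\langle\OO(-(r-1)),\dots,\OO(-1),\OO\rangle$, and it is $\Psi_i\Phi_j$ with $i<j$ that needs to vanish. You flagged the ordering as a point to be careful about in your last paragraph; this is exactly where it bites, and the sign in your displayed computation masks it. Correct the degree to $i-j$ and adjust the inequality accordingly, or track the convention in \cite{BS} so that it matches the order $\langle\Im\Phi_0,\dots,\Im\Phi_{r-1}\rangle$ in the statement.

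Finally, you should note that the reductions to local statements (that $p_*\OO_\P\simeq\OO_\X$, and that $\Psi_i\Phi_j\simeq 0$ can be checked after lisse-\'etale pullback) rest on the projection formula and flat base change for the representable proper morphism $p$, and on the conservativity of the pullback $\Perf(\X,\A^j)\to\prod_i\Perf(U_i,\A^j|_{U_i})$. These are exactly the ingredients that Bergh--Schn\"urer package into their conservative descent framework, so in practice your direct argument and the paper's proof are drawing on the same underlying inputs, just organized differently.
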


\begin{proof}

To prove that the $\Phi_j$ are fully faithful, we will apply Bergh-Schn\"urer's ``conservative descent for fully faithfulness" (\cite[Proposition 4.12]{BS}). 
We recall that a functor between ordinary categories is called \emph{conservative} if it reflects isomorphisms. Note that a  triangulated functor is conservative if and only if it reflects zero objects.

Fix $ j \in \Z$. We have diagrams
$$
\xymatrix{
\prod \Perf(\PP^{r-1}_{U_i}) \ar[rr]^-{\prod \varphi_i^*}_-\simeq && \prod \Perf(U_i \times_\X \P ) & \ar[l]  \Perf(\P) 
\\
\prod \Perf(U_i)\ar[u]^-{\prod \Phi_{i,j}} \ar[rr]^-{\prod (- \otimes \OO_{U_i}^{\oplus rj})}_-\simeq && \prod \Perf(U_i, \A^j|_{U_i}) \ar[u]^-{\prod \Phi_{i,j}} & \ar[l] \Perf(\X, \A^j) \ar[u]^-{\Phi_j}
}
$$
and
$$
\xymatrix{
\prod \Perf(\PP^{r-1}_{U_i}) \ar[d]^-{\prod\Psi_{i,j}} \ar[rr]^-{\prod \varphi_i^*}_-\simeq && \prod \Perf(U_i \times_\X \P) \ar[d]^-{\prod\Psi_{i,j}} & \ar[l]  \Perf(\P) \ar[d]^-{\Psi_j}
\\
\prod \Perf(U_i) \ar[rr]^-{\prod(- \otimes \OO_{U_i}^{\oplus rj})}_-\simeq && \prod \Perf(U_i, \A^j|_{U_i})  & \ar[l] \Perf(\X, \A^j),
}
$$
where the products range over each element $U_i \to U$ of each of the \'etale open covers chosen above. The rightmost horizontal maps are given by pullback,
and $\Phi_{i,j}$ (resp. $\Psi_{i,j}$) is the evident analogue of $\Phi_i$ (resp. $\Psi_i$). It's easy to check that the diagrams commute.

The leftmost vertical map $\prod \Phi_{i,j}$ in the first diagram is quasi-fully faithful by the projective bundle theorem for schemes, and therefore the middle vertical map in the first diagram is as well. Since the triangulated functor induced by the dg functor
$$
\Perf(\X, \A^j) \to \prod \Perf(U_i, \A^j|_{U_i})
$$
on the level of homotopy categories reflects 0 objects, it is conservative. It follows from Bergh-Schn\"urer's conservative descent for fully faithfulness that $\Phi_j$ is also quasi-fully faithful. The semi-orthogonal decomposition now follows immediately from Bergh-Schn\"urer's conservative descent theorem for semi-orthogonal decompositions (\cite[Theorem 5.16]{BS}) and their projective bundle theorem for algebraic stacks (\cite[Corollary 6.8]{BS}).

\end{proof}

\section{Halpern-Leistner-Pomerleano's comparison map}
\label{ktop}

In this section, we recall the construction of the equivalence $\rho_{G, X}$ in Halpern-Leistner-Pomerleano's comparison theorem (Theorem \ref{HLP}). Let $X$, $G$, and $M$ be as in the setup of Theorem \ref{HLP}. Let
$$
r : K(\Perf([X/G])) \to KU_M(X^{\an})
$$
denote the comparison map between the connective $G$-equivariant algebraic $K$-theory of $X$ and the (nonconnective) $M$-equivariant topological $K$-theory of $X^{\an}$ (\cite[Section 5.4]{thomason}). We fix some notation: denote by
\begin{itemize}
\item $\Aff_\C$ the category of affine schemes over $\C$,
\item $\Sigma^{\infty}(-)$ the suspension spectrum functor,
\item $( - )_+$ the operation of adjoining a basepoint to a space, and
\item $\underline{\R\Hom}_{\Sp}(-,-)$ the internal mapping object in $\Sp$.
\end{itemize}
The map $r$ induces a morphism 
\begin{equation}
\label{presheaves}
K(\Perf([X/G] \times_\C - )) \to KU_M(X^{\an} \times (-)^{\an} ) \simeq \underline{\R\Hom}_{\Sp}(\Sigma^{\infty}(( - )_+^{\an}), KU_M(X^{\an}))
\end{equation}
of presheaves of spectra on $\on{Aff}_\C$; in the middle term $KU_M(X^{\an} \times (-)^{\an} )$, the input $( - )^{\an}$ is considered as a space with trivial $M$-action. The equivalence on the right follows from \cite[Lemma 3.10]{HLP}. 

Let $\on{Pre}_{\Aff_\C}(\Sp)$ denote the $\infty$-category of presheaves of spectra on $\Aff_\C$, and let $KU-\on{mod}$ (resp. $ku-\on{mod}$) denote the category of $KU$-modules (resp. $ku$-modules). Let
$$
| - | : \on{Pre}_{\Aff_\C}(\Sp) \to \Sp
$$
denote the topological realization functor described in \cite[Definition 3.13]{blanc} (Blanc denotes this functor by $| - |_{\mathbb{S}}$). Given a dg-category $T$ over $\C$, the \emph{semi-topological $K$-theory of $T$}, denoted $K^{\on{st}}(T)$, is defined to be $|K(T \otimes_\C -)|$. As observed in \cite[Definition 3.13]{blanc}, $| - |$ has a right adjoint given by 
$$
E \mapsto \underline{\R\Hom}_{\Sp}(\Sigma^{\infty}(( - )^{\an}_+), E).
$$
The map (\ref{presheaves}) therefore induces a map
\begin{equation}
\label{semi}
K^{\on{st}}(\Perf([X/G])) \to KU_M(X^{\an}).
\end{equation}
As proven in \cite[Section 4]{blanc}, the semi-topological $K$-theory spectrum of any $\C$-linear dg-category $T$ is a $ku$-module. $K^{\top}(T)$ is defined to be $K^{\on{st}}(T) \otimes_{ku} KU$. Noting that (\ref{semi}) is a morphism of $ku$-modules, the adjunction between $KU-\on{mod}$ and $ku-\on{mod}$ given by extension/restriction of scalars yields a map
$$
\rho_{G, X} : K^{\top}(\Perf[X/G]) \to KU_M(X^{\an}). 
$$
This is the map that appears in the Halpern-Leistner-Pomerleano comparison theorem (Theorem \ref{HLP}).

\section{The comparison map}
\label{proofsection}

Let $X$, $G$, $\A$, and $M$ be as in the setup of Theorem \ref{main}. Let $p : [P/G] \to [X/G]$ be the Severi-Brauer stack of relative dimension $r-1$ corresponding to $\A$, and let $\J$ be the vector bundle on $[P/G]$ introduced in Section \ref{azumaya}.

\subsection{Background on invertible algebra bundles and twists of $K$-theory}
\label{inv}
Our reference for this subsection is \cite{freed}. We recall that a \emph{topological groupoid} $Y$ is a pair of topological spaces $Y_0, Y_1$ that form the objects and morphisms, respectively, in a category in which all morphisms are invertible. Denote by $p_0, p_1 : Y_1 \to Y_0$ the source and target maps, respectively. For example, the space $X^{\an}$ equipped with its $M$-action determines the \emph{global quotient groupoid} $[X^{\an}/M]$ with $[X^{\an}/M]_0 = X^{\an}$ and $[X^{\an}/M]_1 = X^{\an} \times M$. In this case, $p_0(x, m) = x$ and $p_1(x, m) = mx$. 

There is a notion of a bundle of invertible (i.e. finite dimensional central simple) $\C$-algebras over a topological groupoid; we refer the reader to \cite[Definition 1.59]{freed} for details. We observe that, if $\mathcal{R}$ is a $G$-equivariant Azumaya algebra over $X$, its analytification $\mathcal{R}_{\top}$ is an invertible algebra bundle over the groupoid $[X^{\an}/M]$. Note that, in \cite[Definition 1.59]{freed}, the fibers in an invertible algebra bundle are allowed to be $\Z/2$-graded, but, in our setting, all invertible algebra bundles will be trivially graded. Given $M$-equivariant Azumaya algebras $\mathcal{R}$ and $\mathcal{R'}$ on $X$, a \emph{morphism} $\mathcal{R}_{\top} \to \mathcal{R}'_{\top}$ of the associated invertible algebra bundles is an $M$-equivariant $\mathcal{R}'_{\top} $-$\mathcal{R}_{\top}$-bimodule. Such a bimodule $\mathcal{B}$ determines an \emph{isomorphism} if there is an $\mathcal{R}_{\top} $-$\mathcal{R}'_{\top}$ bimodule $\mathcal{B'}$ such that $\mathcal{B} \otimes_{\mathcal{R}_{\top}} \mathcal{B'} \cong \mathcal{R}_{\top}'$ and $\mathcal{B}' \otimes_{\mathcal{R}'_{\top}} \mathcal{B} \cong \mathcal{R}_{\top}$.

Twists of topological $K$-theory of a groupoid can be defined in terms of invertible algebra bundles; see \cite[Definition 1.78]{freed} for the precise definition. In particular, each $\A_{\top}^j$ determines a twist $\a^j$ of the $M$-equivariant $K$-theory of $X^{\an}$ (take the local equivalence in \cite[Definition 1.78]{freed} to be the identity on $X^{\an}$). An \emph{isomorphism} of twists of the topological $K$-theory of $[X^{\an}/M]$ arising from invertible algebra bundles on $[X^{\an}/M]$ in this way is just an isomorphism of the invertible algebra bundles, in the sense defined above. We note that an isomorphism of twists determines an equivalence of twisted topological $K$-theory spectra. 

\subsection{On the topological $K$-theory of twisted equivariant projective bundles}
\label{projbundlesection}
Fix $j \in \Z$. Recall that $\A^j  = p_*\shEnd_{[P/G]}((\J^\vee)^{\otimes j})$, and there is a canonical isomorphism 
$$
p^*(\A^j) \cong \shEnd_{[P/G]}((\J^\vee)^{\otimes j}).
$$
The $\OO_{P^{\an}}$-$p^*(\A_{\top}^j)$-bimodule $(\J^{\an})^{\otimes j}$ therefore determines a canonical isomorphism from $p^*(\A_{\top}^j)$ to the trivial invertible algebra bundle $\OO_{P^{\an}}$, and hence an isomorphism from $p^*(\a^j)$ to the zero twist. This isomorphism induces an equivalence
$$
T_j^{\top} : KU_M^{p^*(\a^j)}(P^{\an}) \xra{\simeq} KU_M(P^{\an}).
$$
We define
$$
\phi_{j} := T^{\top}_j \circ p^*: KU^{\a^j}_M(X^{\an}) \to KU_M(P^{\an}).
$$
The map $\phi_j$ is the analogue of the functor $\Phi_j$ on the level of twisted equivariant $K$-theory; note that $\phi_j$ is natural with respect to pullback along morphisms of Severi-Brauer stacks. We have a map
\begin{equation}
\label{projectivebundle}
KU_M(X^{\an}) \oplus KU_M^{\a}(X^{\an}) \oplus \cdots \oplus KU_M^{\a^{r-1}}(X^{\an})
\xra{
    \begin{pmatrix}\phi_0, \dots, \phi_{r-1} \end{pmatrix}
    }
KU_M(P^{\an}).
\end{equation}

\begin{thm}
\label{trivializable}
\text{ }
\begin{enumerate}
\item[(1)] When the twist $\a$ is trivializable, the map \eqref{projectivebundle} recovers the equivalence from the projective bundle formula in (untwisted) equivariant topological $K$-theory. 
\item[(2)] Suppose 
$$
KU_H(*) \oplus KU_H^{\b}(*) \oplus \cdots \oplus KU_H^{\b^{r-1}}(*)
\xra{
    \begin{pmatrix}\phi_0, \dots, \phi_{r-1} \end{pmatrix}
    }
KU_H(P')
$$
is an equivalence whenever $H$ is a closed subgroup of $M$ and $\b$ is an arbitrary $H$-equivariant twist of the $K$-theory of a point; here, $P'$ is the projective $H$-representation associated to $\beta$. In this case, the map \eqref{projectivebundle} is an equivalence.
\item[(3)] The map \eqref{projectivebundle} is an equivalence when $G$ is trivial.
\end{enumerate}
\end{thm}

\begin{rem}
Theorem \ref{trivializable}(3) was proven by the second author in \cite[Theorem 1.3]{moulinos}; it is a consequence of his comparison theorem for the topological $K$-theory of the dg-category of twisted perfect complexes (Theorem \ref{moulinos} above). We give here a direct proof of this fact that does not involve topological $K$-theory of dg-categories. 
\end{rem}

\begin{proof}
Suppose $\a$ is trivializable. We have $\J^{\an} \cong \OO_{P^{\an}}(-1)^{\oplus r}$, and so $\A_{\top}^j$ is isomorphic to the bundle of endomorphisms of $\OO_{X^{\an}}^{\oplus rj}$. Just as above, the $\OO_{X^{\an}}$-$\A_{\top}^j$-bimodule $\OO_{X^{\an}}^{\oplus rj}$ determines an equivalence
$$
U_j^X : KU^{\a^j}_M(X^{\an}) \xra{\simeq} KU_M(X^{\an}).
$$
Similarly, the $\OO_{P^{\an}}$- $p^*(\A_{\top}^j)$-bimodule $\OO_{P^{\an}}^{\oplus rj}$ induces an equivalence
$$
U_j^P : KU^{p^*(\a^j)}_M(P^{\an}) \xra{\simeq} KU_M(P^{\an}).
$$
Now we observe that the diagram
$$
\xymatrix{
KU^{\a^j}_M(X^{\an}) \ar[d]^-{U^X_j} \ar[r]^-{p^*} & KU_M^{p^*(\a^j)}(P^{\an}) \ar[d]^-{U^P_j}  \ar[r]^-{T_j^{\top}} & KU_M(P^{\an}) \\
KU_M(X^{\an}) \ar[r]^-{p^*} & KU_M(P^{\an}) \ar[ru]_-{- \otimes \OO_{P^{\an}}(-j)}
}
$$
commutes on the level of homotopy groups. The commutativity of the square on the left is clear. As for the triangle on the right, since, in the setup of \cite{freed}, compositions of morphisms of twists are given by tensor products of bimodules, it's easy to check that the map $T_j^{\top} \circ (U_j^P)^{-1}$ is induced by the $\OO_{P^{\an}}$- $\OO_{P^{\an}}$-bimodule given by the line bundle $\OO_{P^{\an}}(-j)$. (1) now follows from \cite[Proposition 3.4 (ii)]{hopkins}.


We now prove (2). Since our space $X^{\an}$ is a smooth manifold, $M$ is compact, and $M$ acts smoothly on $X^{\an}$, $X^{\an}$ admits an open cover by $M$-invariant open subsets $\{U_i\}_{i \in I}$ such that each $U_i$ is $M$-equivariantly homotopy equivalent to an $M$-space of the form $M/H_i$ for some closed subgroup $H_i$ of $M$ (\cite[Corollary VI.2.4]{bredon}; see Section IV.1 for the definition of a ``locally smooth" action). Since $X^{\an}$ is second countable, we can assume this cover is countable; write it as $\{U_i\}_{i \ge 1}$. There is an equivalence 
$KU^{\alpha^j}_M(U_i) \xra{\simeq} KU^{\alpha^j}_H(*)$ for each $i$ and $j$ (here, and throughout the proof, we abuse notation slightly: the superscripts ``$\a^j$" really indicate pullbacks of $\a^j$). Covering $P^{\an}$ with the $M$-invariant open sets $V_i := p^{-1}(U_i)$, we obtain a commutative square
\begin{equation}
\label{squareUi}
\xymatrix{
\bigoplus_{j = 0}^{r-1} KU^{\alpha^{j}}_M(U_i)  \ar[rrr]^-
{
 \begin{pmatrix}
    \phi_0 & \cdots & \phi_{r-1}
    \end{pmatrix}
}
 \ar[d]^-{\simeq} &&&  KU_M(V_i) \ar[d]^-{\simeq} \\
\bigoplus_{j =0}^{r-1}  KU^{\alpha^{j}}_{H_i}(*) \ar[rrr]^-
{
 \begin{pmatrix}
    \phi_0 & \cdots & \phi_{r-1}
    \end{pmatrix}
}
 &&& KU_{H_i}(P_i), \\
}
\end{equation}
where $P_i$ is some projective $H_i$-representation. By assumption, the bottom horizontal map is an equivalence; it follows that the top horizontal map is an equivalence as well. 

For $n \ge 1$, let
$$
A_n := U_1 \cup \cdots \cup U_n
$$ 
and 
$$
B_n := V_1 \cup \cdots \cup V_n.
$$
We observe that each $B_n$ is a projective bundle over $A_n$. We have maps
\begin{equation}
\label{anbn}
\bigoplus_{j = 0}^{r-1} KU^{\a^j}(A_n)
\xra{
    \begin{pmatrix}
    \phi_0 & \cdots & \phi_{r-1}
    \end{pmatrix}
} 
KU(B_n)
\end{equation}
for each $n$. Since the top horizontal maps in the squares \eqref{squareUi} are equivalences, the Mayer-Vietoris theorem for twisted $K$-theory (\cite[Section 3]{hopkins}) implies that the maps (\ref{anbn}) are all equivalences. Let
$$
C_{X} := \bigsqcup_n A_n \times [n, n+1]/ \sim
$$
and
$$
C_{P} := \bigsqcup_n B_n \times [n, n+1]/ \sim
$$
denote the ``infinite mapping cylinders", following the terminology of \cite{hopkins}. The proof of \cite[Proposition A.19]{hopkins} implies that, taking the homotopy limit of (\ref{anbn}) over $n$, we get the map
\begin{equation}
\label{C}
\bigoplus_{j = 0}^{r-1} KU^{\a^j}(C_X)
\xra{
    \begin{pmatrix}
    \phi_0 & \cdots & \phi_{r-1}
    \end{pmatrix}
} 
KU(C_P);
\end{equation}
we conclude that this map is also an equivalence.

Let 
$$
g_X : C_X \to X^{\an}, \text{ } g_P : C_P \to  P^{\an}
$$
denote the canonical maps. As in the proof of \cite[Proposition A.19]{hopkins}, choose a partition of unity subordinate to the open cover $\{U_i\}_{i \ge 1}$ of $X^{\an}$, and use it to construct a section $s_X$ of $g_X$. Pulling back along $p : P^{\an} \to X^{\an}$, we get an induced partition of unity subordinate to the open cover $\{V_i\}_{i \ge 1}$ of $P^{\an}$ and therefore an induced section $s_P$ of $g_P$ such that the diagram
\begin{equation}
\label{diagram}
\xymatrix{
P^{\an} \ar[r]^-{s_P} \ar[d]^-{p} & C_P \ar[d] \ar[r]^-{g_P}  & P^{\an} \ar[d]^-{p}\\
X^{\an} \ar[r]^-{s_X} & C_X \ar[r]^-{g_X} & X^{\an}
}
\end{equation}
commutes. The commutativity of diagram (\ref{diagram}) implies that the map (\ref{projectivebundle}) is a section of the equivalence (\ref{C}), and so (\ref{projectivebundle}) is also an equivalence. This proves (2).

Finally, (3) is immediate from (1) and (2), since any twist of the non-equivariant $K$-theory of a point is trivializable.

\end{proof}





\begin{question}
\label{question}
Is the map (\ref{projectivebundle}) an equivalence in general?
\end{question}

\begin{ex}
\label{thepoint}
To answer Question \ref{question}, it suffices, by Theorem \ref{trivializable}(2), to consider the case where $X = \Spec(\C)$. In this case, $P^{\an}$ is simply a projective $M$-representation. Such an object corresponds to a central extension of the form
$$
1 \to S^1 \to \widetilde{M} \to M \to 1.
$$
This central extension canonically determines a complex $\widetilde{M}$-representation $V$, and we have
$$
KU_M(P^{\an}) \simeq KU_M(S(V) / S^1) \simeq KU_{\widetilde{M}}(S(V)),
$$
where $S(V)$ denotes the unit sphere in $V$. Let $B(V)$ denote the unit ball in $V$. By the Thom isomorphism and the long exact sequence of the pair $(B(V), S(V))$, we have an exact sequence
$$
0 \to KU^1_M(P^{\an}) \to \on{Rep}(\widetilde{M}) \xra{\lambda^{-1}[V]} \on{Rep}(\widetilde{M}) \to KU^0_M(P^{\an}) \to 0,
$$
where $\on{Rep}(\widetilde{M})$ denotes the representation ring of $\widetilde{M}$, and $\lambda^{-1}[V] = \sum (-1)^i \lambda^i[V]$. 

On the other hand, by \cite[Example 1.10]{hopkins}, we have
$$
KU^{\alpha^{i}, j}_M(*) = \begin{cases} \on{Rep}^{\alpha^i}(M) & j = 0 \\ 0 & j = 1 \end{cases},
$$
where $\on{Rep}^{\alpha^i}(M)$ denotes the ring of $\alpha^i$-twisted representations of $M$. So, to prove the projective bundle formula for twisted equivariant $K$-theory in the case of a point, one must show
\begin{itemize}
\item[(a)] The map $\bigoplus_{i = 0}^{r-1} \on{Rep}^{\alpha^i}(M) \to \on{Rep}(\widetilde{M}) / (\lambda^{-1}[V])$ induced by (\ref{projectivebundle}) is an isomorphism, and 
\item[(b)] $\ker(\on{Rep}(\widetilde{M}) \xra{\lambda^{-1}[V]} \on{Rep}(\widetilde{M}) ) = 0$.
\end{itemize}
We have been unable to prove either of these two statements. 

We remark that the answer to Question \ref{question} is ``yes" in the case where $X = \Spec(\C)$ and $G$ is a torus. To see this, say $M = (S^1)^d$. We have $H_M^3(* ; \Z) = H^3(BM ; \Z) = H^3((\C\PP^{\infty})^d ; \Z) = 0$; thus, every twist of the $M$-equivariant topological $K$-theory of a point is trivializable, and so the statement follows from Theorem \ref{trivializable}(1). 
\end{ex}

\subsection{Proof of Theorem \ref{main}}

As discussed in the introduction, we define our comparison map
$$
KU^{\alpha}_M(X^{\an}) \to K^{\top}(\Perf([X/G], \A))
$$
by mapping $KU_M^{\a}(X^{\an})$ to $KU_M(P^{\an})$ via (\ref{projectivebundle}), applying the inverse of the Halpern-Leistner-Pomerleano equivalence $\rho_{G, P}$ discussed in Section \ref{ktop}, and then projecting onto the summand $K^{\top}(\Perf([X/G], \A))$ of $K^{\top}(\Perf([P/G]))$ via the equivalence in Theorem \ref{bernardera}.

To prove Theorem \ref{main}, we will need the following 

\begin{lem}
\label{mixed}
Assume that the answer to Question \ref{question} is ``yes". Denote by $\psi_j$ the composition
$$
KU_M(P^{\an}) \xra{(\phi_0 \cdots \phi_{r-1})^{-1}} \bigoplus_{l = 0}^{r-1} KU_M^{\a^l}(X^{\an}) \to KU_M^{\a^j}(X^{\an}),
$$
where the second map is projection onto the $j^{\th}$ component, so that $\psi_j$ is a canonical splitting of $\phi_j$. In this case, in the setting of Theorem \ref{main}, Halpern-Leistner-Pomerleano's comparison map $\rho_{G,P}$ respects the decompositions of $K^{\top}(\Perf[P/G])$ and $KU_M(P^{\an})$ arising from Theorems \ref{bernardera} and the map \eqref{projectivebundle}, respectively. More precisely, if $0 \le j,k \le r-1$ and $j \ne k$, the map $\psi_j \circ \rho_{G, P} \circ \Phi_k$ is trivial, i.e. it induces the zero map in the stable homotopy category.

\end{lem}

\begin{proof}
Recall the comparison map $r : K(\Perf([P/G])) \to KU_M(P^{\an})$ from Section \ref{ktop}. Our first step is to show

$$
\psi_j \circ r \circ \Phi_k: K(\Perf([X/G], \A^k)) \to KU^{\a^j}_M(X^{\an})
$$
is trivial. The map $r$ factors as 
$$
K(\Perf([P/G])) \xleftarrow{\simeq} K(\Vect_G P) \to K(\Vect^{\top}_M P^{\an}) \to KU_M(P^{\an}),
$$
where $K(\Vect_G(P))$ (resp. $K(\Vect_M^{\top}(P))$) denotes the connective algebraic $K$-theory of the exact category of $G$-equivariant vector bundles on $P$ (resp. $M$-equivariant complex vector bundles on $P^{\an}$). Note that, since $P$ is $G$-quasi-projective, $[P/G]$ has the resolution property; this is why the map $K(\Vect_G P)\to  K(\Perf([P/G]))$ is an equivalence. 

 We have a commutative diagram
$$
\xymatrix{
K(\Perf([P/G])) & \ar[l]_-{\simeq}  K(\Vect_G P) \ar[r] & K(\Vect^{\top}_M P^{\an}) \ar[r] & KU_M(P^{\an}) \\ 
K(\Perf([X/G], \A^k)) \ar[u]_-{\Phi_k} & \ar[l]_-{\simeq}  \ar[r]  K(\Vect_G (X, \A^k)) \ar[u]^-{\Phi_k} & K(\Vect^{\top}_M (X^{\an}, \A_{\top}^k)) \ar[u]^-{\Phi_k}  \ar[r]  & KU_M^{\a^k}(X^{\an})\ar[u]^-{\phi_k}.
}
$$
The middle two vertical maps are given by the same formula as $\Phi_k$, and we abuse notation by referring to them with the same symbol. The horizontal maps are all the canonical ones.
Observing that the composition
$$
KU_M^{\a^k}(X^{\an}) \xra{\phi_k} KU_M(P^{\an}) \xra{\psi_j} KU^{\a^j}_M(X^{\an})
$$
is trivial, we conclude that $ \psi_j \circ r \circ \Phi_k$ is trivial.

We now show $\psi_j \circ \rho_{G, P} \circ \Phi_k$ is trivial. Since $\psi_j \circ r \circ \Phi_k$ is trivial, the composition
\begin{align*}
K(\Perf([X/G], \A^k) \otimes_\C - )  \xra{\Phi_k} K(\Perf([P/G]) \otimes_\C - ) & \to  \underline{\R\Hom}_{\Sp}(\Sigma^\infty(( - )^{\an}_+), KU_M(P^{\an})) \\ 
& \xra{\psi_j} \underline{\R\Hom}_{\Sp}(\Sigma^\infty(( - )^{\an}_+), KU^{\a^j}_M(X^{\an}))
\end{align*}
is a trivial map of presheaves of spectra on $\Aff_\C$ (i.e. the map is trivial pointwise), where the middle map is as constructed in Section \ref{ktop}. It follows that the induced map
$$
K^{\on{st}}(\Perf([X/G], \A^k)) \xra{\Phi_k}  K^{\on{st}}(\Perf([P/G])) \to KU_M(P^{\an}) \xra{\psi_j} KU_M^{\a^j}(X^{\an})
$$
is also trivial (here, $K^{\on{st}}( - )$ denotes the semi-topological $K$-theory functor for dg categories, whose definition is recalled in Section \ref{ktop}).  Finally, we conclude that the induced map
$$
K^{\on{\top}}(\Perf([X/G], \A^k)) \xra{\Phi_k}  K^{\on{\top}}(\Perf([P/G])) \xra{\rho_{G, P}} KU_M(P^{\an}) \xra{\psi_j} KU_M^{\a^j}(X^{\an})
$$
is trivial. 
\end{proof}

\begin{proof}[Proof of Theorem \ref{main}]
Our hypothesis is that the answer to Question \ref{question} is ``yes". Define maps $\psi_j$ as in the statement of Lemma \ref{mixed}. We have maps
$$
\psi_j \circ \rho_{G,P} \circ \Phi_j: K^{\top}(\Perf([X/G], \A^{j})) \to  KU^{\a^j}_M(X^{\an}),
$$
where $\rho_{G,P}$ is Halpern-Leistner-Pomerleano's comparison map. By Theorems \ref{bernardera} and our assumption that the map (\ref{projectivebundle}) is an equivalence, the composition 

\begin{align*}
\bigoplus_{j = 0}^{r-1} K^{\top}(\Perf([X/G], \A^j) 
\xra{\begin{pmatrix}
    \Phi_0 & \cdots & \Phi_{r-1}
    \end{pmatrix}}
 K^{\top}(\Perf([P/G]))& 
\xra{\rho_{G,P}} KU_M(P^{\an})
 \\  
& \xra{
\begin{pmatrix}
\psi_0 \\
\vdots \\
\psi_{r-1}
\end{pmatrix}
}
\bigoplus_{j = 0}^{r-1} KU^{\a^j}_M(X^{\an})
\end{align*}
is an equivalence, i.e. the induced map
$$
\bigoplus_{j = 0}^{r-1} K_*^{\top}(\Perf([X/G], \A^j) \xra{
\begin{pmatrix} \psi_0 \circ\rho_{G, P} \circ\Phi_0 & \cdots & \psi_{0} \circ\rho_{G, P}\circ \Phi_{r-1}  \\ \vdots & \vdots & \vdots
\\ \psi_{r-1}\circ \rho_{G, P}\circ \Phi_{0} & \cdots & \psi_{r-1}\circ \rho_{G, P}\circ \Phi_{r-1} \end{pmatrix} 
} \bigoplus_{j = 0}^{r-1} KU^{\a^j, *}_M(X^{\an})
$$
of $\Z$-graded abelian groups is an isomorphism. By Lemma \ref{mixed}, the off-diagonal entries of this matrix are 0, and so the diagonal entries are all isomorphisms; that is, each comparison map $\psi_j \circ \rho_{G,P} \circ \Phi_j: K^{\top}(\Perf([X/G], \A^{j})) \to  KU^{\a^j}_M(X^{\an})$ is an equivalence. Finally, observe that our equivalence is the inverse of $\psi_1 \circ \rho_{G, P} \circ \Phi_1$.  
\end{proof}



\begin{rem}
\label{moulinosthm}
In the construction of the comparison map, the smoothness assumption is only necessary so that we can (1) use Halpern-Leistner-Pomerleano's comparison map, and (2) choose an equivariantly contractible open cover of $X^{\an}$. In particular, the smoothness assumption is not necessary when $G$ is trivial, as we can just use Blanc's comparison map in this case, and, of course, $X^{\an}$ is locally contractible as well. Similarly, ``quasi-projective" can be replaced with ``separated of finite type" when $G$ is trivial. With this in mind, notice that Theorem \ref{trivializable}(3) and Theorem \ref{main} imply that our comparison map is an equivalence in the non-equivariant case. This gives a new, simpler proof of the second author's comparison theorem for the topological $K$-theory of the dg-category of (non-equivariant) twisted perfect complexes (Theorem \ref{moulinos}).
\end{rem}

\bibliographystyle{amsalpha}
\bibliography{Bibliography}

\providecommand{\bysame}{\leavevmode\hbox to3em{\hrulefill}\thinspace}
\providecommand{\MR}{\relax\ifhmode\unskip\space\fi MR }
\providecommand{\MRhref}[2]{%
  \href{http://www.ams.org/mathscinet-getitem?mr=#1}{#2}
}
\providecommand{\href}[2]{#2}
\begin{thebibliography}{KKP08}

\bibitem[AS04]{AS}
Michael Atiyah and Graeme Segal, \emph{Twisted {K}-theory}, Ukr. Mat. Vis.
  (2004), no.~1, 287--330.

\bibitem[Ber09]{bern}
Marcello Bernardara, \emph{A semiorthogonal decomposition for {B}rauer-{S}everi
  schemes}, Mathematische Nachrichten \textbf{282} (2009), no.~10, 1406--1413.

\bibitem[Bla16]{blanc}
Anthony Blanc, \emph{Topological {K}-theory of complex noncommutative spaces},
  Compositio Mathematica \textbf{152} (2016), no.~3, 489--555.

\bibitem[Bre72]{bredon}
Glen~E. Bredon, \emph{Introduction to compact transformation groups}, Pure and
  Applied Mathematics, Vol. 46, Academic Press, New York-London, 1972.

\bibitem[BS19]{BS2}
Daniel Bergh and Olaf~M. Schn{\"u}rer, \emph{Decompositions of derived
  categories of gerbes and of families of {B}rauer-{S}everi varieties}, arXiv
  preprint arXiv:1901.08945 (2019).

\bibitem[BS20]{BS}
\bysame, \emph{Conservative descent for semi-orthogonal decompositions},
  Advances in Mathematics \textbf{360} (2020), 106882.

\bibitem[FHT11]{hopkins}
Daniel~S. Freed, Michael~J. Hopkins, and Constantin Teleman, \emph{Loop groups
  and twisted {$K$}-theory {I}}, J. Topol. \textbf{4} (2011), no.~4, 737--798.

\bibitem[Fre12]{freed}
Daniel Freed, \emph{Lectures on twisted {K}-theory and orientifolds}, Erwin
  Schr{\"o}dinger Institute program K-theory and Quantum Fields (2012).

\bibitem[HLP15]{HLP}
Daniel Halpern-Leistner and Daniel Pomerleano, \emph{Equivariant {H}odge theory
  and noncommutative geometry}, arXiv preprint arXiv:1507.01924 (2015).

\bibitem[KKP08]{KKP}
Ludmil Katzarkov, Maxim Kontsevich, and Tony Pantev, \emph{Hodge theoretic
  aspects of mirror symmetry}, arXiv preprint arXiv:0806.0107 (2008).

\bibitem[May96]{May}
J.~Peter May, \emph{Equivariant homotopy and cohomology theory, volume 91 of
  {CBMS} {R}egional {C}onference {S}eries in {M}athematics}, Published for the
  Conference Board of the Mathematical Sciences, Washington, DC, 1996, p.~88.

\bibitem[Mou19]{moulinos}
Tasos Moulinos, \emph{Derived {A}zumaya algebras and twisted {K}-theory},
  Advances in Mathematics \textbf{351} (2019), 761--803.

\bibitem[Ols16]{olssonbook}
Martin Olsson, \emph{Algebraic spaces and stacks}, vol.~62, American
  Mathematical Soc., 2016.

\bibitem[Qui73]{quillen}
Daniel Quillen, \emph{Higher algebraic {K}-theory {I}}, Higher {K}-theories,
  Springer, 1973, pp.~85--147.

\bibitem[Seg68]{segal}
Graeme Segal, \emph{Equivariant {K}-theory}, Publications math{\'e}matiques de
  l'IH{\'E}S \textbf{34} (1968), no.~1, 129--151.

\bibitem[Tho88]{thomason}
Robert~W. Thomason, \emph{Equivariant algebraic vs. topological $ {K}
  $-homology {A}tiyah-{S}egal-style}, Duke Mathematical Journal \textbf{56}
  (1988), no.~3, 589--636.

\bibitem[To{\"e}10]{toenktop}
Bertrand To{\"e}n, \emph{Saturated dg categories {III}}, Lecture notes from
  Workshop on Homological Mirror Symmetry and Related Topics, University of
  Miami, January 18-23, 2010: available at
  {\url{https://math.berkeley.edu/~auroux/frg/miami10-notes/}} (2010).

\end{thebibliography}
\end{document}